\documentclass[11pt]{amsart}

\makeatletter
\@namedef{subjclassname@2020}{\emph{2020} Mathematics Subject Classification}
\makeatother

\usepackage[margin=1in]{geometry} 
\usepackage{amsmath, amsthm, amsfonts, amssymb} 
\usepackage{mathrsfs} 
\usepackage{enumerate} 
\usepackage{xcolor} 
\usepackage{bbm} 
\usepackage{hyperref} 
\usepackage{bm} 

\usepackage{chngcntr}
\counterwithin{equation}{section} 

\usepackage{thmtools, thm-restate} 

\theoremstyle{theorem}
	\newtheorem{theorem}{Theorem}[section]
	
	\newtheorem{proposition}[theorem]{Proposition}
	\newtheorem{corollary}[theorem]{Corollary}
\theoremstyle{definition}
	\newtheorem{remark}[theorem]{Remark}

\newcommand{\N}{\mathbb{N}}
\newcommand{\Z}{\mathbb{Z}}

\newcommand{\R}{\mathbb{R}}

\newcommand{\T}{\mathbb{T}}

\newcommand{\K}{\mathbb{K}}

\newcommand{\F}{\mathbb{F}}

\newcommand{\mZ}{\mathcal{Z}}
\newcommand{\mQ}{\mathcal{Q}}
\newcommand{\mR}{\mathcal{R}}
\newcommand{\mT}{\mathcal{T}}

\newcommand{\mK}{\mathcal{K}}

\renewcommand{\hat}{\widehat}

\newcommand{\seminorm}[2]{{\left\vert\kern-0.25ex\left\vert\kern-0.25ex\left\vert #2 
    \right\vert\kern-0.25ex\right\vert\kern-0.25ex\right\vert}_{#1}}

\title{A Weyl equidistribution theorem over function fields}

\author{Ethan Ackelsberg}

\date{\today}

\keywords{}

\subjclass[2020]{Primary: 11J71. Secondary: 11K36, 11R58, 11T55.}

\begin{document}

\maketitle

\begin{abstract}
	A classical theorem of Weyl states that any polynomial with an irrational coefficient other than the constant term is uniformly distributed mod 1.
	We prove a new function field analogue of this statement, confirming a conjecture of L\^{e}, Liu, and Wooley.
\end{abstract}


\section{Introduction}

The purpose of this short note is to provide a positive resolution to a conjecture of L\^{e}, Liu, and Wooley (\cite[Conjecture 1.3]{llw}) concerning equidistribution of polynomial sequences over function fields.

We briefly review Weyl's classical equidistribution theorem as motivation for the function field analogue.
A sequence of real numbers $(x_n)_{n \in \N}$ is \emph{uniformly distributed mod 1} if for every interval $(a,b) \subseteq [0,1]$,
\begin{equation*}
	\frac{\left| \left\{ 1 \le n \le N : \{x_n\} \in (a,b) \right\} \right|}{N} \to b-a,
\end{equation*}
where $\{x_n\}$ denotes the fractional part of $x_n$.
In other words, the amount of time the sequence $(x_n)_{n \in \N}$ spends in any given interval is proportional to the length of the interval.
The study of uniform distribution was initiated by Weyl \cite{weyl}, who proved the following remarkable theorem.

\begin{theorem}[{\cite[Satz 9]{weyl}}] \label{thm: Weyl}
	Let $P(n) = \alpha_d n^d + \ldots + \alpha_1 n + \alpha_0 \in \R[n]$.
	If at least one of $\alpha_1, \ldots, \alpha_d$ is irrational, then $(P(n))_{n \in \N}$ is uniformly distributed mod 1.
\end{theorem}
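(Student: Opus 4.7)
The plan is the classical Weyl approach: reduce uniform distribution modulo $1$ to decay of exponential sums via Weyl's criterion, then establish this decay by induction on $d$ using van der Corput differencing. By Weyl's criterion it suffices to show that
$$S_N(h) := \frac{1}{N} \sum_{n=1}^N e^{2\pi i h P(n)} \tendsto{N \to \infty} 0$$
for every nonzero integer $h$.

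The core quantitative statement I would prove by induction on $d$ is: if $Q \in \R[n]$ has degree $d$ with irrational leading coefficient $\beta$, then $\frac{1}{N}\sum_{n=1}^N e^{2\pi i Q(n)} \to 0$. The base case $d = 1$ is a geometric series bounded by $1/(N|1 - e^{2\pi i \beta}|)$. For the inductive step, van der Corput's inequality gives, for any $1 \le H \le N$,
$$\left|\frac{1}{N}\sum_{n=1}^N e^{2\pi i Q(n)}\right|^2 \ll \frac{1}{H} + \frac{H}{N} + \frac{1}{H}\sum_{j=1}^{H-1}\left|\frac{1}{N}\sum_{n=1}^{N-j} e^{2\pi i (Q(n+j) - Q(n))}\right|.$$
For each fixed $j \ne 0$, the difference $Q(n+j) - Q(n)$ is a polynomial in $n$ of degree $d - 1$ with leading coefficient $d j \beta$, still irrational, so by the inductive hypothesis the inner average tends to $0$ as $N \to \infty$. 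Sending $N \to \infty$ for fixed $H$ yields $\limsup |S_N|^2 \le 1/H$, and then letting $H \to \infty$ closes the induction.

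To deduce Theorem~\ref{thm: Weyl}, I would reduce the hypothesis that \emph{some} nonconstant coefficient is irrational to the stronger hypothesis that the leading coefficient is irrational. Let $k \in \{1, \ldots, d\}$ be the largest index with $\alpha_k \notin \Q$ and let $Q$ be a common denominator for $\alpha_{k+1}, \ldots, \alpha_d$. Partitioning $\N$ into residue classes modulo $Q$ and writing $n = Qm + r$, a direct expansion shows $\alpha_j (Qm+r)^j \equiv \alpha_j r^j \pmod{1}$ for each $j > k$ (since $\alpha_j Q \in \Z$ forces $\alpha_j Q^i \in \Z$ for $i \ge 1$), while $\sum_{j \le k} \alpha_j (Qm+r)^j$ is a polynomial in $m$ of degree $k$ with leading coefficient $\alpha_k Q^k \notin \Q$. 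Applying the quantitative claim above to $h P(Qm + r)$ on each residue class, and noting that uniform distribution along each of the finitely many arithmetic progressions implies uniform distribution along $\N$, gives $S_N(h) \to 0$.

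The main obstacle is packaging the reduction step cleanly: one must carefully verify that the irrationality of the leading coefficient is preserved after the substitution $n \mapsto Qm + r$ and that the passage from residue classes back to $\N$ is cost-free. Once that is in place, the van der Corput induction is entirely textbook; the iterated limit ($N$ first, then $H$) and the choice of $Q$ as a common denominator take care of themselves.
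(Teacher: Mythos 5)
The paper does not prove Theorem~\ref{thm: Weyl} at all: it is quoted from Weyl's 1916 paper purely as motivation for the function field analogue, so there is no in-paper argument to compare against. Your proposal is a correct, self-contained proof along the classical lines (Weyl's criterion, induction on the degree via van der Corput differencing, and reduction to an irrational leading coefficient by passing to residue classes modulo a common denominator of the rational top coefficients). The key checks all go through: $Q(n+j)-Q(n)$ has irrational leading coefficient $dj\beta$ for each fixed $j\ne 0$, the iterated limit ($N\to\infty$ first, then $H\to\infty$) is handled correctly, the substitution $n=Qm+r$ leaves leading coefficient $h\alpha_k Q^k\notin\Q$ while the higher terms contribute only integer shifts (plus a harmless constant phase $h\sum_{j>k}\alpha_j r^j$), and averaging over the finitely many residue classes recovers the full exponential sum. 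This is essentially Weyl's original squaring/differencing argument in its modern van der Corput packaging; only cosmetic issues remain, such as the reuse of the symbol $Q$ for both the auxiliary polynomial and the common denominator.
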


Let us now set our notation and terminology for the function field setting.
Fix a finite field $\F_q$ of characteristic $p$.
We introduce the suggestive notation $\mZ$ for the polynomial ring $\F_q[t]$ and $\mQ$ for the field of rational functions $\F_q(t)$.
The field $\mQ$ has an absolute value $\left| \frac{u}{v} \right| = q^{\deg{u} - \deg{v}}$ with the convention that $\deg{0} = - \infty$.
The completion of $\mQ$ with respect to this absolute value is (isomorphic to) the field of formal Laurent series
\begin{equation*}
	\mR = \F_q((t^{-1})) = \left\{ \sum_{n=-\infty}^N c_n t^n : N \in \Z, c_n \in \F_q \right\}.
\end{equation*}
The quotient space $\mR/\mZ$ is a compact group, which we denote by $\mT$.
In analogy with the well-known isomorphism $\hat{\Z} \cong \T = \R/\Z$, there is an isomorphism between the dual group $\hat{\mZ}$ and the group $\mT$, which we now describe.
Recall that, viewing $\F_q$ as an extension of $\F_p$, there is a trace map $\textup{Tr}_{\F_q/\F_p} : \F_q \to \F_p$, which can be written explicitly as $\textup{Tr}_{\F_q/\F_p}(c) = c + c^p + \ldots + c^{p^{k-1}}$, where $q = p^k$.
Every character on $\mZ$ is then of the form $u \mapsto e(u\alpha)$ for some $\alpha \in \mT$, where
\begin{equation*}
	e \left( \sum_{n=-\infty}^N c_n t^n \right) = \exp \left( \frac{2\pi i \cdot \textup{Tr}_{\F_q/\F_p}(c_{-1})}{p} \right).
\end{equation*}

In order to define a notion of uniform distribution in this context, we need an averaging method to replace averaging the first $N$ terms of a sequence and an analogue of subintervals of $[0,1]$.
The averaging method comes through the notion of a F{\o}lner sequence.
A \emph{F{\o}lner sequence} in $\mZ$ is a sequence $(F_N)_{N \in \N}$ of finite subsets of $\mZ$ such that
\begin{equation*}
	\frac{|F_N \cap (F_N + u)|}{|F_N|} \to 1
\end{equation*}
as $N \to \infty$.
The topology on $\mT$ is generated by cylinder sets
\begin{equation*}
	C(M; a_1, \ldots, a_M) = \left\{ \sum_{n=-\infty}^{-1} c_n t^n : c_{-1} = a_1, \ldots, c_{-M} = a_M \right\}
\end{equation*}
for $M \in \N$ and $a_1, \ldots, a_M \in \F_q$, and these will play the role of intervals.
Hence, we say that a $\mZ$-sequence $(a(u))_{u \in \mZ}$ is \emph{uniformly distributed} in $\mT$ with respect to a F{\o}lner sequence $(F_N)_{N \in \N}$ if for every $M \in \N$ and $a_1, \ldots, a_M \in \F_q$,
\begin{equation*}
	\frac{\left| \left\{ u \in F_N : a(u) \in C(M; a_1, \ldots, a_M) \right\} \right|}{|F_N|} \to q^{-M}.
\end{equation*}
We say that $(a(u))_{u \in \mZ}$ is \emph{well-distributed} if it is uniformly distributed with respect to every F{\o}lner sequence.

Contrasting with the situation in characteristic zero, there are polynomial sequences in the function field setting that have irrational coefficients but fail to be well-distributed.
For example, if $\alpha = \sum_{n=1}^{\infty} \alpha_n t^{-pn}$ with $(\alpha_1, \alpha_2, \ldots)$ not eventually periodic, then $\alpha \notin \mQ$, but for any $u = \sum_{n=0}^N c_n t^n \in \mZ$,
\begin{equation*}
	\alpha u^p = \sum_{n=1}^{\infty} \alpha_n t^{-pn} \cdot \sum_{m=0}^N c_m^p t^{pm} = \sum_{n=1}^{\infty} \left( \sum_{m=0}^N \alpha_{n+m} c_m^p \right) t^{-pn} + \underbrace{\sum_{n=0}^{N-1} \left( \sum_{m=n+1}^N \alpha_{m-n} c_m^p \right) t^{pn}}_{\in \mZ}
\end{equation*}
so $\alpha u^p$ is never contained in the cylinder set $C(1; 1)$.
Similar examples can be constructed for polynomials of the form $\alpha u^p + \beta u$, for example, using the fact that both $u$ and $u^p$ are additive homomorphisms; see \cite[Equation (6.9)]{carlitz}.

L\^{e}, Liu, and Wooley conjectured that the above examples are representative of the general situation in the sense that the only obstacles to equidistribution are exponents divisible by $p$ (creating obstructions because the Frobenius map is a homomorphism) and ``interference'' between exponents when one is a multiple of the other by a power of $p$; see \cite[Conjecture 1.3]{llw}.

The main result of this paper is the following function field analogue of Theorem \ref{thm: Weyl}, confirming \cite[Conjecture 1.3]{llw}\footnote{There are some notational differences between the present paper and \cite{llw}. L\^{e}, Liu, and Wooley use $\K_{\infty}$ to denote the field $\mR$ and $\T$ for the group $\mT$. Their conjecture also asks for the weaker conclusion that $(P(u))_{u \in \mZ}$ is uniformly distributed with respect to the particular F{\o}lner sequence $\mathbb{G}_N = \{u \in \mZ : \deg{u} < N\}$, but it turns out that uniform distribution and well-distribution are equivalent for polynomial sequences, so this makes little difference.}.

\begin{theorem} \label{thm: main}
	Let $\mK$ be a finite set of positive integers.
	Suppose $\alpha_r \in \mR$ for $r \in \mK \cup \{0\}$, and let
	\begin{equation*}
		P(u) = \sum_{r \in \mK \cup \{0\}} \alpha_r u^r \in \mR[u].
	\end{equation*}
	Suppose that for some $k \in \mK$,
	\begin{itemize}
		\item	$p \nmid k$,
		\item	for any $v \in \N$, $p^vk \notin \mK$, and
		\item	$\alpha_k \notin \mQ$.
	\end{itemize}
	Then $(P(u))_{u \in \mZ}$ is well-distributed in $\mT$.
\end{theorem}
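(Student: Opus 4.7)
The plan is to use Weyl's criterion followed by iterated van der Corput differencing, reducing the problem to a character-sum bound on $\mZ$ that we control via the irrationality of $\alpha_k$. By Weyl's criterion, it suffices to show that for every nonzero $v \in \mZ$ and every F{\o}lner sequence $(F_N)$, the Weyl sum $S_N(v) = \tfrac{1}{|F_N|} \sum_{u \in F_N} e(vP(u))$ tends to $0$. The van der Corput inequality, iterated $m$ times along auxiliary F{\o}lner sequences, yields
\[
    |S_N(v)|^{2^m} \le \mathbb{E}_{\bfh} \Bigl| \tfrac{1}{|F_N|} \sum_{u \in F_N} e(v \Delta_\bfh P(u)) \Bigr| + o_N(1),
\]
where $\bfh = (h_1, \ldots, h_m)$ and $\Delta_\bfh P = \Delta_{h_1} \cdots \Delta_{h_m} P$ is the $m$-fold iterated difference with $\Delta_h f(u) = f(u+h) - f(u)$.

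The key feature of characteristic $p$ is that iterated differencing quickly produces a polynomial that is \emph{additive} in $u$. Concretely, by Lucas's theorem on binomial coefficients modulo $p$, for $m$ chosen sufficiently large in terms of $\max \mK$ and $p$, one has $\Delta_\bfh P(u) = \phi_\bfh(u) + C(\bfh)$, where $\phi_\bfh(u) = \sum_i c_i(\bfh)\, u^{p^i}$ is an $\mR$-linear combination of Frobenius powers of $u$ (hence additive in $u$) and $C(\bfh)$ is constant in $u$. Since $\phi_\bfh$ is additive and $e$ is a character of $(\mR,+)$, the map $u \mapsto e(v\phi_\bfh(u))$ is a character of $(\mZ,+)$, corresponding under the duality $\hat{\mZ} \cong \mT$ to some $\beta(\bfh) \in \mT$. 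The inner Weyl sum tends to $0$ whenever $\beta(\bfh) \ne 0$ in $\mT$, so the whole problem reduces to showing that $\beta(\bfh) \ne 0$ for F{\o}lner-most $\bfh \in \mZ^m$.

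This last reduction is the hardest step. The plan is to track the contribution of the distinguished monomial $\alpha_k u^k$ throughout the iterated differencing. The hypothesis $p \nmid k$ ensures that this contribution does not collapse to a constant in $u$ (as would happen for monomials $u^{p^v}$), so some coefficient $c_i(\bfh)$ receives a nonzero input involving $\alpha_k$; the hypothesis $p^v k \notin \mK$ for all $v \ge 1$ then prevents this input from being canceled by contributions from Frobenius-related monomials $\alpha_{p^v k} u^{p^v k}$ in $P$. Consequently, $\beta(\bfh)$, viewed modulo $\mZ$, is a ``polynomial in $\bfh$'' whose leading coefficient along the $v\alpha_k$-direction in the $\mQ$-vector space $\mR/\mZ$ is nonzero, and a Lagrange-interpolation argument---decomposing $\mR$ as a $\mQ$-vector space and using $\alpha_k \notin \mQ$---shows that the set of ``bad'' $\bfh$ with $\beta(\bfh) = 0$ in $\mT$ is contained in the zero locus of a nonzero polynomial in $\bfh$ over $\F_q$. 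Such a zero locus is F{\o}lner-null in $\mZ^m$, so $\beta(\bfh) \ne 0$ for F{\o}lner-most $\bfh$ and we conclude. Executing the cancellation-avoidance calculation and the resulting density bound rigorously is the most delicate step of the proof.
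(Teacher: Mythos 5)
Your route (Weyl's criterion plus iterated van der Corput differencing) is genuinely different from the paper's, which instead decomposes $P(u)=\alpha_0+\alpha_k u^k+\sum_{s\in\mK'}\eta_s(u^s)$ into additive polynomials applied to distinct separable monomials and then quotes Bergelson--Leibman's structure theorem together with their linear result ($\mathcal{F}(\alpha_k u)=\mT$ for irrational $\alpha_k$). But your sketch has a genuine gap exactly at the step you yourself flag as delicate, and the mechanism you propose for it does not work as stated. After differencing you are left with characters $u\mapsto e\bigl(v\sum_i c_i(\bfh)u^{p^i}\bigr)$, and you need these to be nontrivial for F{\o}lner-most $\bfh$. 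Nontriviality of such a character is \emph{not} detected by irrationality (or nonvanishing ``in the $v\alpha_k$-direction'') of the coefficients $c_i(\bfh)$: the paper's own introductory example, $\alpha=\sum_n\alpha_n t^{-pn}$ irrational yet $u\mapsto e(\alpha u^p)$ identically trivial, shows an additive character with an irrational Frobenius-power coefficient can vanish identically. What is actually required is that the coefficient vector $(c_i(\bfh))_i$ avoid, for most $\bfh$, the closed subgroup of $\mT^{s+1}$ of tuples whose associated additive character is trivial; membership in that kernel is a condition on Laurent-series coefficients of the $c_i(\bfh)$, not visibly the zero locus of a polynomial over $\F_q$ in $\bfh$, and the asserted ``Lagrange-interpolation'' reduction is neither carried out nor plausible in that form. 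Establishing this avoidance statement is essentially the content of Bergelson--Leibman's theorem on well-distribution of additive polynomial sequences in their $\Phi$-subtorus --- i.e., it is the hard part of the problem, which the Weyl-sum approach of L\^{e}--Liu--Wooley only managed in special cases; your sketch assumes it rather than proves it.

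There is also a more local error in the tracking step: the claim that $p\nmid k$ prevents the contribution of $\alpha_k u^k$ from collapsing to a constant is false once you difference too many times. For $p=2$, $k=3$, one differencing gives $\alpha_k(hu^2+h^2u)+\text{const}$, which is additive in $u$, but a second differencing gives the constant $\alpha_k(h_1h_2^2+h_1^2h_2)$; since in characteristic $p$ one generally must stop while Frobenius powers $u^{p^i}$ with $i\ge 1$ are still present (rather than differencing down to a linear polynomial), you cannot avoid the kernel-membership problem above, and the choice of the stopping time $m$ must be made per-monomial with much more care than ``$m$ sufficiently large in terms of $\max\mK$ and $p$.'' To repair the argument you would essentially have to redo the Bergelson--Leibman analysis of when additive polynomial characters are trivial and how often; the paper avoids all of this by arranging, via the hypotheses $p\nmid k$ and $p^vk\notin\mK$, that the additive polynomial attached to the separable monomial $u^k$ is exactly $\alpha_k u$, and then invoking their Theorems 0.1 and 0.3.
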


We prove Theorem \ref{thm: main} in Section \ref{sec: proof}.
Afterwards, in Section \ref{sec: multivariable}, we give an extension of Theorem \ref{thm: main} to polynomials of several variables and polynomials over general function fields.

\begin{remark}
	A proof of Theorem \ref{thm: main} by different methods appears in \cite{cgllw}.
\end{remark}


\section{Proof of Theorem \ref{thm: main}} \label{sec: proof}

The main ingredient in the proof of Theorem \ref{thm: main} is a general equidistribution theorem of Bergelson and Leibman that we reproduce below in Theorem \ref{thm: bl ud} using the notation established in the introduction.
Before stating their theorem, we need a few more definitions.

We say that a polynomial $\eta(u) \in \mR[u]$ is \emph{additive} if $\eta(u+v) = \eta(u) + \eta(v)$ for $u,v \in \mZ$.
Additive polynomials are precisely the polynomials of the form $\eta(u) = \alpha_0 u + \alpha_1 u^p + \ldots + \alpha_s u^{p^s}$ for $s \ge 0$ and $\alpha_0, \alpha_1, \ldots, \alpha_s \in \mR$.
We say that a monomial $u^r$ is \emph{separable} if $p \nmid r$ and a polynomial $P(u) \in \mR[u]$ is \emph{separable} if it is a linear combination of separable monomials.
By dividing out powers of $p$ in each term, every polynomial $P(u) \in \mR[u]$ can be uniquely expressed in the form $P(u) = \alpha_0 + \sum_{i=1}^d \eta_i(u^{r_i})$, where $\eta_1, \ldots, \eta_d$ are additive and $u^{r_1}, \ldots, u^{r_d}$ are distinct and separable.
The equidistribution theorem of Bergelson and Leibman says that the distributional behavior of $P(u)$ is controlled by the additive polynomials $\eta_1, \ldots, \eta_d$ involved in this decomposition.

\begin{theorem}[\cite{bl}, Theorem 0.3] \label{thm: bl ud}
	Any additive polynomial $\mZ$-sequence $(\eta(u))_{u \in \mZ}$ in $\mT$ is well-distributed in a set of the form $\mathcal{F}(\eta) + \eta(K)$, where $\mathcal{F}(\eta)$ is a $\Phi$-subtorus\footnote{For the purposes of this paper, it is enough to know that a $\Phi$-subtorus is a closed subgroup that can be associated to an additive polynomial $\eta$. For a proper definition, see \cite[Section 1]{bl}.} of $\mT$ and $K$ is a finite subset of $\mZ$.
	
	For any polynomial $\mZ$-sequence
	\begin{align*}
		P(u) = \alpha_0 + \sum_{i=1}^d{\eta_i \left( u^{r_i} \right)},
	\end{align*}
	where $\alpha_0 \in \mT$, $\eta_1, \dots, \eta_d$ are additive polynomial $\mZ$-sequences, and $u^{r_1}, \dots, u^{r_d}$ are distinct separable monomials, the closure $\mathcal{O}(P) = \overline{P(\mZ)}$ of $P(\mZ)$ has the form $\mathcal{F}(P) + P(K)$, where $\mathcal{F}(P)$ is the $\Phi$-subtorus $\sum_{i=1}^d{\mathcal{F}(\eta_i)}$ and $K$ is a finite subset of $\mZ$.
	
	Moreover, $(P(u))_{u \in \mZ}$ is well-distributed in the components of $\mathcal{F}(P) + P(k)$, $k \in K$, of $\mathcal{O}(P)$.
	In particular, $(P(u))_{u \in \mZ}$ is well-distributed in $\mT$ if $\sum_{i=1}^d{\mathcal{F}(\eta_i)} = \mT$.
\end{theorem}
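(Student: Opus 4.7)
My plan follows the standard template for equidistribution theorems: reduce via Weyl's criterion to vanishing of character sums, dispatch the additive case by Pontryagin duality, and obtain the general polynomial case by a van der Corput / PET-style induction on complexity. By Weyl's criterion on the compact group $\mT$, a $\mZ$-sequence $(a(u))_{u \in \mZ}$ is well-distributed in a coset $c + H$ of a closed subgroup $H \le \mT$ along every F{\o}lner sequence iff, for every character $\chi$ of $\mT$ nontrivial on $H$, the Ces\`{a}ro averages $\frac{1}{|F_N|} \sum_{u \in F_N} \chi(a(u))$ vanish along every F{\o}lner sequence. If $\eta: \mZ \to \mT$ is additive, then $\eta$ is a group homomorphism, so $\eta(\mZ)$ is a subgroup of $\mT$ and $\mathcal{F}(\eta) := \overline{\eta(\mZ)}$ is a closed subgroup; the composition $\chi \circ \eta$ is itself a character of $\mZ$, nontrivial exactly when $\chi|_{\mathcal{F}(\eta)} \ne 1$. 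Since nontrivial characters of the amenable group $\mZ$ have vanishing Ces\`{a}ro averages along every F{\o}lner sequence, Weyl's criterion yields well-distribution of $\eta$ in $\mathcal{F}(\eta)$, with $K = \{0\}$ sufficing.

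For the general polynomial $P(u) = \alpha_0 + \sum_i \eta_i(u^{r_i})$, the inclusion $\overline{P(\mZ)} \subseteq \alpha_0 + \sum_i \overline{\eta_i(\mZ)} = \alpha_0 + \mathcal{F}(P)$ is immediate. To identify the closure as $\mathcal{F}(P) + P(K)$ for some finite $K \subseteq \mZ$, one shows that $P$ descends modulo $\mathcal{F}(P)$ to a map with finite image; the fact that this image is a finite union of cosets rather than a single coset reflects the non-additivity of $u \mapsto u^{r_i}$ when $r_i \ge 2$, and the $\Phi$-subtorus structure of $\mathcal{F}(\eta_i)$ from \cite{bl} is engineered precisely so that this finiteness holds.

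The substantive content reduces, by Weyl's criterion, to showing that for each $h \in \mZ$ for which $e(h \cdot)$ is nontrivial on $\mathcal{F}(P)$,
\[
\UClim_{u \in \mZ} e(h P(u)) = 0.
\]
The plan is to induct on a complexity invariant of $P$ --- naturally the lexicographic pair $(r, m)$ with $r = \max_i r_i$ the top separable exponent and $m$ the number of indices $i$ with $r_i = r$ --- using van der Corput's inequality in $\mZ$: it suffices that for a density-one set of shifts $s \in \mZ$, the analogous average for $P(u+s) - P(u)$ vanishes. Expanding $(u+s)^r - u^r$, the leading term $r s u^{r-1}$ has coefficient $r \in \F_p^\times$ (as $p \nmid r$), so the difference polynomial has strictly smaller top separable exponent. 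The main obstacle, specific to positive characteristic, is that differencing can create new $p$-power monomials that must be absorbed into the additive parts $\eta_i$ rather than contributing to the separable backbone; the complexity invariant (the ``d-degree'' of \cite{bl}) must be engineered finely enough that this reassembly still produces a strict decrease for almost every shift $s$, and verifying this is the heart of the PET-type induction.
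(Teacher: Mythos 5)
First, a framing point: the paper does not prove this statement at all --- it is quoted, with attribution, from Bergelson--Leibman \cite[Theorem 0.3]{bl}, so the comparison must be with their proof, which occupies the bulk of that paper. Your additive case is correct and does match the easy part of the story: an additive $\eta$ is a group homomorphism, so $\overline{\eta(\mZ)}$ is a closed subgroup, $\chi \circ \eta$ is a character of the discrete amenable group $\mZ$, and nontrivial characters average to zero along every F{\o}lner sequence. But even here you have quietly weakened the statement: declaring $\mathcal{F}(\eta) := \overline{\eta(\mZ)}$ and $K = \{0\}$ discards the actual content of the additive theorem, namely that this closure equals a $\Phi$-subtorus up to finitely many translates $\eta(K)$. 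That structural identification is not bookkeeping; it is precisely what the polynomial case consumes, since the induction needs the subtori to lie in a restricted class stable under the operations that differencing produces.

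The genuine gap is in the polynomial case, where both load-bearing claims are asserted rather than proved, and one of them circularly. (a) The finiteness of the image of $P$ modulo $\mathcal{F}(P)$ (which is what produces $K$ and the component structure of $\mathcal{O}(P)$) is justified by appeal to ``the $\Phi$-subtorus structure \ldots from \cite{bl}'' --- i.e., by citing the theorem under proof. (b) In the van der Corput step, it is not enough that the top separable exponent of $Q_s(u) = P(u+s) - P(u)$ drops (that much is true, since $\eta_i\bigl((u+s)^{r_i} - u^{r_i}\bigr)$ splits additively into terms $\eta_i\bigl(\binom{r_i}{j} s^{r_i - j} u^j\bigr)$ with $j < r_i$, reassembled as Frobenius-twisted additive maps applied to separable monomials of lower exponent). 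You must also show that for a density-one set of shifts $s$, the character $\chi$ --- assumed nontrivial on some $\mathcal{F}(\eta_i)$ --- remains nontrivial on $\mathcal{F}(Q_s)$, whose constituent additive parts are the twisted maps $w \mapsto \eta_i\bigl(\binom{r_i}{j} s^{r_i - j} w^{p^v}\bigr)$. In characteristic $p$ the subtorus of $\eta(s\,\cdot)$ genuinely varies with $s$, and images can collapse badly (the paper's own example of $\alpha u^p$ avoiding $C(1;1)$ for every $u$ despite $\alpha \notin \mQ$ is exactly this phenomenon); this persistence claim is where the naive characteristic-zero vdC/PET scheme breaks and where Bergelson--Leibman's theory of $\Phi$-subtori and their degree-type induction does the real work. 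You yourself flag this as ``the heart of the PET-type induction'' and leave it unverified --- which is to say the proof is missing its heart. Finally, your induction is set up only to prove $\UClim_{u \in \mZ} e(hP(u)) = 0$ for $h$ nontrivial on $\mathcal{F}(P)$, whereas the theorem's component-wise conclusion (F{\o}lner-independent visit frequencies to the cosets $\mathcal{F}(P) + P(k)$, $k \in K$, with equidistribution inside each) requires inducting on the full structural statement, including the behavior of characters \emph{trivial} on $\mathcal{F}(P)$, which your sketch never addresses. In short: a plan in the right spirit, tracking the actual architecture of \cite{bl}, but not a proof.
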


\begin{remark}
	There is a subtle difference between the formulation of \cite[Theorem 0.3]{bl} as given in Theorem \ref{thm: bl ud} and the original formulation from \cite{bl}.
	In the present paper, we write that $\sum_{i=1}^d{\mathcal{F}(\eta_i)} = \mT$ is a sufficient condition for $(P(u))_{u \in \mZ}$ to be well-distributed in $\mT$.
	In \cite{bl}, it is incorrectly claimed that this condition is also necessary, a claim that was duplicated in an earlier version of this article on \emph{arXiv}.
	(The word ``if'' in the final sentence of Theorem \ref{thm: bl ud} was previously the phrase ``if and only if'' and appears in \cite{bl} as ``iff.''
	The ``if'' direction follows from the prior part of the theorem statement, while the ``only if'' direction does not.)
	A consequence of the incorrect statement is distilled as Hypothesis 5.1 in \cite{cgllw}, where a counterexample is also produced.
	Since the present paper is concerned only with a sufficient condition for equidistribution, this small error does not impact our usage of Theorem \ref{thm: bl ud}.
	Similar comments apply to the multivariable equidistribution result in Theorem \ref{thm: bl multivariable}.
\end{remark}

In the proof of Theorem \ref{thm: main}, we will establish well-distribution in $\mT$ by using Theorem \ref{thm: bl ud} and showing that for one of the additive polynomials $\eta$ appearing in the decomposition of $P$, we have $\mathcal{F}(\eta) = \mT$.
The additional ingredient used for this argument is the following result for linear polynomials:

\begin{proposition}[{\cite[Theorem 0.1]{bl}}] \label{prop: irrational dense orbit}
	Let $\alpha \in \mR \setminus \mQ$.
	Then for the additive polynomial $\eta(u) = \alpha u$, one has $\mathcal{F}(\eta) = \mT$.
\end{proposition}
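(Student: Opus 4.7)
The plan is to verify Weyl's equidistribution criterion for the linear sequence $(\eta(u))_{u \in \mZ} = (\alpha u)_{u \in \mZ}$ and then invoke Theorem \ref{thm: bl ud}. Indeed, applying that theorem with $d = 1$, $r_1 = 1$, and $\eta_1 = \eta$, the condition $\mathcal{F}(\eta) = \mT$ is equivalent to $(\eta(u))_{u \in \mZ}$ being well-distributed in $\mT$, so it suffices to prove the latter.

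For well-distribution, I will check that for every nontrivial character $\chi$ of $\mT$ and every F\o lner sequence $(F_N)_{N \in \N}$ in $\mZ$,
\[
	\frac{1}{|F_N|}\sum_{u \in F_N}\chi(\alpha u) \to 0.
\]
By Pontryagin duality applied to the identification $\hat{\mZ} \cong \mT$ recalled in the introduction, every character of the compact group $\mT$ has the form $\chi_v(x) = e(vx)$ for some $v \in \mZ$, and $\chi_v$ is nontrivial precisely when $v \neq 0$. Substituting $\chi_v(\alpha u) = e(v\alpha u)$ reduces the problem to showing that the F\o lner average of $u \mapsto e(v\alpha u)$ vanishes for each $v \in \mZ \setminus \{0\}$.

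The irrationality hypothesis $\alpha \notin \mQ$ enters precisely here. The map $u \mapsto e(v\alpha u)$ is itself a character of $\mZ$, which corresponds under $\hat{\mZ} \cong \mT$ to the class of $v\alpha$ in $\mT$. If $v\alpha \in \mZ$, then $\alpha = (v\alpha)/v \in \mQ$, contradicting the assumption; hence $u \mapsto e(v\alpha u)$ is a nontrivial character of $\mZ$ for every $v \in \mZ \setminus \{0\}$.

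Finally, I will verify the standard fact that any nontrivial character $\psi$ of $\mZ$ has vanishing F\o lner average: choose $u_0 \in \mZ$ with $\psi(u_0) \neq 1$ and translate the summation index to obtain
\[
	(\psi(u_0) - 1)\cdot \frac{1}{|F_N|}\sum_{u \in F_N}\psi(u) = \frac{1}{|F_N|}\biggl(\sum_{u \in F_N + u_0}\psi(u) - \sum_{u \in F_N}\psi(u)\biggr),
\]
whose modulus is bounded by $|F_N \triangle (F_N + u_0)|/|F_N| \to 0$ by the F\o lner property. I do not expect any serious obstacle: this is essentially a direct transposition of the classical Weyl / F\o lner-averaging argument to the function field setting, the sole nontrivial input being the observation that $v\alpha \notin \mZ$ for every nonzero $v \in \mZ$.
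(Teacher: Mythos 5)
Your argument is correct, but it is worth noting that the paper does not prove this proposition at all: it is quoted verbatim as Theorem 0.1 of Bergelson--Leibman \cite{bl} and used as a black box, exactly like Theorem \ref{thm: bl ud}. What you have done is rederive that cited statement from the other cited statement: you use the ``in particular'' clause of Theorem \ref{thm: bl ud} (with $d=1$, $r_1=1$, $\eta_1=\eta$) to translate $\mathcal{F}(\eta)=\mT$ into well-distribution of $(\alpha u)_{u\in\mZ}$, and then verify the latter by the classical F{\o}lner--Weyl computation: characters of $\mT$ are $x\mapsto e(vx)$ with $v\in\mZ$ by Pontryagin duality, $v\alpha\notin\mZ$ for $v\ne 0$ since $\alpha\notin\mQ$, and a nontrivial character of $\mZ$ has vanishing F{\o}lner averages by the translation trick. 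All of these steps are sound (the one step you leave implicit, that vanishing of the averages of all nontrivial characters yields uniform distribution with respect to cylinder sets, is immediate because the indicator of a cylinder set is a finite linear combination of characters, and the paper itself invokes Weyl's criterion in this way in the proof of Corollary \ref{cor: global}). Given that the paper's footnote treats $\Phi$-subtori as opaque, routing everything through the equivalence in Theorem \ref{thm: bl ud} is really the only handle available, so your proof is the natural self-contained substitute for the citation: it makes the paper rely on \cite{bl} only through Theorem \ref{thm: bl ud}, at the cost of silently reproving the linear case of Bergelson--Leibman's Theorem 0.1 (essentially the classical Carlitz-type equidistribution of $(\alpha u)_{u\in\mZ}$ for irrational $\alpha$), whereas the author simply imports that result wholesale.
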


\begin{proof}[Proof of Theorem \ref{thm: main}]
	The assumption $p \nmid k$ means that $u^k$ is a separable monomial.
	The additional assumption $k \in \mK$ and $p^vk \notin \mK$ for $v \in \N$ implies that $P$ decomposes as
	\begin{align*}
		P(u) = \alpha_0 + \alpha_k u^k + \sum_{s \in \mK'}{\eta_s \left( u^s \right)},
	\end{align*}
	where $\mK' = \left\{ s \in \N : p \nmid s~\text{and}~p^vs \in \mK \setminus \{k\}~\text{for some $v \ge 0$} \right\}$, and for $s \in \mK'$, $\eta_s$ is the additive polynomial
	\begin{align*}
		\eta_s(u) = \sum_{\substack{v \ge 0, \\ p^vs \in \mK}} \alpha_{p^vs} u^{p^v}.
	\end{align*}
	Finally, since $\alpha_k$ is irrational, the additive polynomial
	\begin{align*}
		\eta_k(u) = \alpha_k u
	\end{align*}
	satisfies $\mathcal{F}(\eta_k) = \mT$ by Proposition \ref{prop: irrational dense orbit}.
	Hence, $\mathcal{F}(\eta_k) + \sum_{s \in \mK'}{\mathcal{F}(\eta_s)} = \mT$.
	By Theorem \ref{thm: bl ud}, it follows that $(P(u))_{u \in \mZ}$ is well-distributed in $\mT$.
\end{proof}


\section{A multivariable extension of Theorem \ref{thm: main}} \label{sec: multivariable}

Theorem \ref{thm: main} can be extended to multivariable polynomials.
To aid with the formulation of the theorem, we introduce the notation $S_d(r) = \{\bm{s} = (s_1, \ldots, s_d) : s_1 + \ldots + s_d = r\}$.

\begin{theorem} \label{thm: multivariable}
	Let $d \in \N$.
	Let $\mK$ be a finite set of positive integers.
	Suppose $\alpha_{\bm{s}} \in \mR$ for $\bm{s} \in S_d(r)$, $r \in \mK \cup \{0\}$, and let
	\begin{equation*}
		P(u_1, \ldots, u_d) = \alpha_0 + \sum_{r \in \mK \cup \{0\}} \sum_{\bm{s} \in S_d(r)} \alpha_{\bm{s}} \prod_{j=1}^d u_j^{s_j} \in \mR[u_1, \ldots, u_d].
	\end{equation*}
	Suppose that for some $k \in \mK$ and $\bm{l} \in S_d(k)$,
	\begin{itemize}
		\item	$p \nmid k$,
		\item	for any $v \in \N$, $p^vk \notin \mK$, and
		\item	$\alpha_{\bm{l}} \notin \mQ$.
	\end{itemize}
	Then $(P(\bm{u}))_{\bm{u} \in \mZ^d}$ is well-distributed in $\mT$.
\end{theorem}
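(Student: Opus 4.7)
My plan is to mirror the proof of Theorem \ref{thm: main}: decompose $P(\bm{u})$ into univariate additive polynomials applied to ``primitive'' multi-variable monomials $\bm{u}^{\bm{r}}$, isolate the piece corresponding to $\bm{r} = \bm{l}$, verify that it reduces to a linear monomial with irrational coefficient, and then appeal to a multi-variable analogue of Theorem \ref{thm: bl ud}.

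Since $p \nmid k = l_1 + \cdots + l_d$, at least one $l_j$ is coprime to $p$, so $\bm{l}$ is primitive in the sense that no positive power of $p$ divides all of its nonzero components. More generally, every multi-index $\bm{s} \in \N^d \setminus \{0\}$ admits a unique factorization $\bm{s} = p^v \bm{r}$ with $v \ge 0$ and $\bm{r}$ primitive (take $v$ to be the minimum $p$-adic valuation among the nonzero components of $\bm{s}$). The monomial identity $\bm{u}^{p^v \bm{r}} = (\bm{u}^{\bm{r}})^{p^v}$ then allows me to group the terms of $P$ to obtain
\[
    P(\bm{u}) = \alpha_0 + \sum_{\bm{r}\ \text{primitive}} \eta_{\bm{r}}(\bm{u}^{\bm{r}}),
\]
where each $\eta_{\bm{r}}(x) = \sum_{v \ge 0} \alpha_{p^v \bm{r}}\, x^{p^v}$ is a univariate additive polynomial, with the convention that $\alpha_{\bm{s}} = 0$ when $\bm{s}$ is not indexed in the definition of $P$.

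Next I would examine $\eta_{\bm{l}}$. For each $v \ge 1$, the multi-index $p^v \bm{l}$ has total degree $p^v k$, which lies outside $\mK$ by the second hypothesis, so $\alpha_{p^v \bm{l}} = 0$. Hence $\eta_{\bm{l}}(x) = \alpha_{\bm{l}} x$ is linear, and since $\alpha_{\bm{l}} \notin \mQ$, Proposition \ref{prop: irrational dense orbit} yields $\mathcal{F}(\eta_{\bm{l}}) = \mT$.

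Finally, I would invoke a multi-variable analogue of Theorem \ref{thm: bl ud} asserting that $(P(\bm{u}))_{\bm{u} \in \mZ^d}$ is well-distributed in $\mT$ whenever $\sum_{\bm{r}} \mathcal{F}(\eta_{\bm{r}}) = \mT$; since $\mathcal{F}(\eta_{\bm{l}})$ alone equals $\mT$, the criterion is immediately satisfied. The main obstacle is producing this multi-variable well-distribution criterion for arbitrary F{\o}lner sequences in $\mZ^d$, since the statement quoted in the excerpt is only univariate. Such an extension fits naturally within the framework of \cite{bl}; in its absence, one could attempt to derive it from Theorem \ref{thm: bl ud} by combining Weyl's criterion with a Fubini-style reduction to product F{\o}lner sequences, using that product F{\o}lner sequences form a testing class rich enough to detect well-distribution.
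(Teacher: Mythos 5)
Your argument follows essentially the same route as the paper: decompose $P$ into additive polynomials applied to distinct separable monomials (grouping exponent vectors as $\bm{s} = p^v\bm{r}$ with $\bm{r}$ primitive), observe that the hypothesis $p^vk \notin \mK$ forces $\eta_{\bm{l}}(x) = \alpha_{\bm{l}}x$, apply Proposition \ref{prop: irrational dense orbit} to get $\mathcal{F}(\eta_{\bm{l}}) = \mT$, and conclude from the multivariable equidistribution criterion. The step you flag as the ``main obstacle'' is not something you need to manufacture: the multivariable criterion is Theorem 10.1 of \cite{bl}, quoted in the paper as Theorem \ref{thm: bl multivariable}, which says verbatim that $(P(\bm{u}))_{\bm{u} \in \mZ^d}$ is well-distributed in $\mT$ if and only if the sum of the $\Phi$-subtori $\mathcal{F}(\eta_i)$ in such a decomposition is all of $\mT$; with that citation your proof is complete and coincides with the paper's. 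I would, however, caution against your fallback suggestion of deriving the multivariable criterion from the univariate Theorem \ref{thm: bl ud} by a Fubini-type argument: on a fixed fiber $u_2, \ldots, u_d$ the relevant coefficient of the $u_1$-polynomial is $\alpha_{\bm{l}}u_2^{l_2}\cdots u_d^{l_d}$ plus contributions from other degree-$k$ terms, which can be rational or vanish for infinitely many fibers, and one would also need uniformity of the fiberwise convergence in the frozen variables; testing on product F{\o}lner sequences alone does not dispose of these issues, so that sketch should not be regarded as a proof (fortunately it is not needed). One small point in your favor: indexing the decomposition by primitive exponent vectors is exactly the canonical form demanded by Theorem \ref{thm: bl multivariable}, and is in fact tidier than the paper's grouping by separable total degrees, which, read literally, overlooks separable monomials whose total degree is divisible by $p$ (e.g.\ $u_1u_2$ when $p = 2$); this bookkeeping slip does not affect the paper's conclusion, since the argument only uses $\mathcal{F}(\eta_{\bm{l}}) = \mT$, but your version of the decomposition is the correct one.
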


The main tool for the proof of Theorem \ref{thm: multivariable} is the multivariable generalization of Theorem \ref{thm: bl ud} stated below.

\begin{theorem}[{\cite[Theorem 10.1]{bl}}] \label{thm: bl multivariable}
	Suppose
	\begin{align*}
		P(u_1, \ldots, u_d) = \alpha_0 + \sum_{i=1}^m{\eta_i \left( \prod_{j=1}^d u_j^{r_{i,j}} \right)},
	\end{align*}
	where $\alpha_0 \in \mT$, $\eta_1, \dots, \eta_m$ are additive polynomials, and $\prod_{j=1}^d u_j^{r_{1,j}}, \dots, \prod_{j=1}^d u_j^{r_{m,j}}$ are distinct separable\footnote{In this context, \emph{separable} means that at least one of the exponents $r_{i,j}$ is not divisible by $p$.} monomials.
	Then the closure $\mathcal{O}(P) = \overline{P(\mZ^d)}$ of $P(\mZ^d)$ has the form $\mathcal{F}(P) + P(K)$, where $\mathcal{F}(P)$ is the $\Phi$-subtorus $\sum_{i=1}^m{\mathcal{F}(\eta_i)}$ and $K$ is a finite subset of $\mZ^d$.
	
	Moreover, $(P(u_1, \ldots, u_d))_{(u_1,\ldots,u_d) \in \mZ^d}$ is well-distributed in the components of $\mathcal{F}(P) + P(k)$, $k \in K$, of $\mathcal{O}(P)$.
	In particular, $(P(u_1, \ldots, u_d))_{(u_1,\ldots,u_d) \in \mZ^d}$ is well-distributed in $\mT$ if $\sum_{i=1}^m{\mathcal{F}(\eta_i)} = \mT$.
\end{theorem}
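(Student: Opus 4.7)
The plan is to reduce Theorem \ref{thm: bl multivariable} to the single-variable counterpart Theorem \ref{thm: bl ud} via Weyl's criterion combined with restriction of $P$ to generic affine lines in $\mZ^d$. The containment $\mathcal{O}(P) \subseteq \mathcal{F}(P) + P(K)$ with $\mathcal{F}(P) = \sum_{i=1}^m \mathcal{F}(\eta_i)$ is the easy direction: since each monomial $\prod_{j=1}^d u_j^{r_{i,j}}$ takes values in $\mZ$, Theorem \ref{thm: bl ud} applied to each additive $\eta_i$ gives $\overline{\eta_i(\mZ)} = \mathcal{F}(\eta_i) + \eta_i(K_i)$ for some finite $K_i \subseteq \mZ$, and summing over $i$ produces the containment with a finite $K \subseteq \mZ^d$.

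For the reverse inclusion and well-distribution, I would invoke Weyl's criterion: it suffices to show that for every character $\chi \in \hat{\mT}$ non-trivial on $\mathcal{F}(P)$, the averages $\frac{1}{|F_N|} \sum_{\bm{u} \in F_N} \chi(P(\bm{u}))$ vanish along every F{\o}lner sequence in $\mZ^d$. Non-triviality of $\chi$ on $\mathcal{F}(P)$ forces non-triviality on some $\mathcal{F}(\eta_{i_0})$. For a generic direction $\bm{w} \in \mZ^d$ and base point $\bm{v} \in \mZ^d$, the restriction $u \mapsto P(\bm{v} + u \bm{w})$ is a single-variable polynomial $Q_{\bm{v}, \bm{w}}(u) \in \mR[u]$ whose additive-separable decomposition retains a non-trivial contribution from $\eta_{i_0}$; Theorem \ref{thm: bl ud} then yields decay of $\frac{1}{|G_M|} \sum_u \chi(Q_{\bm{v}, \bm{w}}(u))$ along any line-wise F{\o}lner sequence. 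Decomposing an arbitrary F{\o}lner sequence $(F_N)$ in $\mZ^d$ into cosets of $\mZ \bm{w}$ and applying a Fubini-type argument transfers the single-variable decay back to the multivariable setting, giving both the reverse inclusion and the quantitative well-distribution statement on each component of $\mathcal{F}(P) + P(K)$.

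The main obstacle is the generic-line step. Expanding $\prod_j (v_j + u w_j)^{r_{i,j}}$ produces cross-terms, and distinct separable monomials of $P$ may, after restriction, collide in the new variable $u$, causing the associated additive polynomials $\eta_i$ to merge into combinations whose $\Phi$-subtori could a priori be strictly smaller than $\sum_i \mathcal{F}(\eta_i)$. Verifying that, on a generic line, the restricted $\mathcal{F}(Q_{\bm{v}, \bm{w}})$ still contains the relevant part of $\mathcal{F}(\eta_{i_0})$ uses the separability hypothesis essentially: separable monomials cannot be absorbed into $p$-power reparameterizations of the $\eta_i$. Making this precise requires selecting $\bm{w}$ to avoid a thin exceptional set of directions, most naturally via a Zariski-density argument over $\F_q$ on the space of admissible directions, and then checking that this non-degeneracy persists uniformly enough to control the F{\o}lner averages.
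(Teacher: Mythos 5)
First, a point of reference: the paper does not prove this statement at all --- it is quoted verbatim from Bergelson--Leibman \cite[Theorem 10.1]{bl}, and their proof works directly with $\mZ^d$-sequences, extending the single-variable machinery of $\Phi$-subtori and van der Corput-type inductions to several variables, rather than reducing to one variable along lines. So your proposal is necessarily a different route, and the question is whether it works. It does not, and the failure is exactly at the step you flag as ``the main obstacle'': the generic-line reduction breaks down in positive characteristic, and not merely on a thin exceptional set of directions --- it breaks on \emph{every} line. Take $p = 2$ and $P(u_1, u_2) = \eta(u_1 u_2)$ with $\eta(x) = \alpha x$, $\alpha \notin \mQ$, so that $\mathcal{F}(\eta) = \mT$ by Proposition \ref{prop: irrational dense orbit} and the theorem asserts well-distribution in $\mT$. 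Restricting to any line gives
\begin{equation*}
	P(v_1 + u w_1, v_2 + u w_2) = \alpha v_1 v_2 + \alpha (v_1 w_2 + v_2 w_1) u + \alpha w_1 w_2 u^2,
\end{equation*}
whose only separable monomial is $u$, carrying the merged additive polynomial $x \mapsto \alpha(v_1 w_2 + v_2 w_1) x + \alpha w_1 w_2 x^2$. Deciding whether the $\Phi$-subtorus of such an additive polynomial $\beta_0 x + \beta_1 x^p$ is all of $\mT$ is precisely the delicate phenomenon illustrated by the $\alpha u^p + \beta u$ examples in the introduction: irrationality of the coefficients does not suffice, and neither Theorem \ref{thm: bl ud} nor Proposition \ref{prop: irrational dense orbit} as stated gives any lower bound on $\mathcal{F}$ of the merged polynomial in terms of $\mathcal{F}(\eta)$. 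So no choice of direction $\bm{w}$ rescues the claim that ``the restricted $\mathcal{F}(Q_{\bm{v},\bm{w}})$ still contains the relevant part of $\mathcal{F}(\eta_{i_0})$''; your appeal to separability is not enough, because separable monomials in several variables (here $u_1 u_2$, and in general e.g.\ $u_1 u_2^{p-1}$) restrict to polynomials whose top term $w_1 w_2^{p-1} u^p$ is non-separable in $u$, forcing exactly the Frobenius mergers you hoped to avoid.

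Two further steps are also gapped, though they are secondary. For the containment $\mathcal{O}(P) \subseteq \mathcal{F}(P) + P(K)$, summing the closures $\overline{\eta_i(\mZ)} = \mathcal{F}(\eta_i) + \eta_i(K_i)$ only places $P(\mZ^d)$ inside $\alpha_0 + \sum_i \bigl( \mathcal{F}(\eta_i) + \eta_i(K_i) \bigr)$, where the translates range over independent tuples $(k_1, \ldots, k_m) \in \prod_i K_i$; the theorem's finer assertion that the components are translates by actual values $P(k)$, $k \in K \subseteq \mZ^d$ (which is what makes the ``well-distributed in the components'' clause meaningful), does not follow, since the monomials $\prod_j u_j^{r_{i,j}}$ are correlated as $\bm{u}$ varies. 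And in the Fubini step, even where the line-wise argument applies, applying Theorem \ref{thm: bl ud} separately for each base point $\bm{v}$ yields only qualitative decay for each fixed line, with a rate depending on the coefficients of $Q_{\bm{v},\bm{w}}$, which vary with $\bm{v}$; transferring to an arbitrary F{\o}lner sequence in $\mZ^d$ requires decay uniform in $\bm{v}$, which the cited black-box statements do not provide. In short, the proposal correctly identifies where the difficulty lives but does not overcome it, and the example above shows the line-restriction strategy cannot be repaired by genericity alone.
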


We can now prove Theorem \ref{thm: multivariable}.

\begin{proof}[Proof of Theorem \ref{thm: multivariable}]
	As in the proof of Theorem \ref{thm: main}, let
	\begin{equation*}
		\mK' = \left\{ s \in \N : p \nmid s~\text{and}~p^vs \in \mK \setminus \{k\}~\text{for some $v \ge 0$} \right\}.
	\end{equation*}
	Then $P(u_1, \ldots, u_d)$ decomposes as
	\begin{equation*}
		P(u_1, \ldots, u_d) = \alpha_0 + \sum_{\bm{l} \in S_d(k)} \alpha_{\bm{l}} \prod_{j=1}^d u_j^{l_j} + \sum_{s \in \mK'} \sum_{\bm{t} \in S_d(s)} \eta_{\bm{t}} \left( \prod_{j=1}^d u_j^{t_j} \right),
	\end{equation*}
	where
	\begin{equation*}
		\eta_{\bm{t}}(u) = \sum_{\substack{v \ge 0 \\ p^vs \in \mK}} \alpha_{p^v\bm{t}} \prod_{j=1}^d u_j^{p^vt_j}
	\end{equation*}
	for $\bm{t} \in S_d(s)$.
	
	For $\bm{l} \in S_d(k)$, let $\eta_{\bm{l}}(u) = \alpha_{\bm{l}} u$.
	By assumption, $\alpha_{\bm{l}} \notin \mQ$ for some $\bm{l} \in S_d(k)$, whence $\mathcal{F}(\eta_{\bm{l}}) = \mT$ by Proposition \ref{prop: irrational dense orbit}.
	Therefore, $(P(u_1, \ldots, u_d))_{(u_1,\ldots,u_d) \in \mZ^d}$ is well-distributed in $\mT$ by Theorem \ref{thm: bl multivariable}.
\end{proof}

A consequence of Theorem \ref{thm: multivariable} is an equidistribution theorem for polynomial sequences in the dual group of the ring of integers of an arbitrary global function field.
In this abstract setting, we use the following notation.
We use $\K$ to denote a global function field (i.e., a finite extension of $\F_q(t)$ for some $q$) and denote its ring of integers (the integral closure of $\F_q[t]$) by $\mZ_{\K}$.
We will write $\mT_{\K}$ for the dual group of $\mZ_{\K}$.
For us, polynomial sequences in $\mT_{\K}$ will be $\mZ_{\K}$-sequences of the form $P(u) = \prod_{j=0}^d \alpha_j^{u^j}$, where $\alpha_j \in \mZ_{\K}$, and $\alpha^u$ is the character defined by $\alpha^u : v \mapsto \alpha(uv)$.
When $\K = \F_q(t)$, this agrees with the notion of polynomial sequences dealt with in Theorem \ref{thm: main} up to notational changes.

The appropriate notion of irrationality for ``coefficients'' $\alpha \in \mT_{\K}$ is defined as follows: we say that $\alpha \in \mT_{\K}$ is \emph{rational} if $\alpha^u = 1$ for some $u \in \mZ_{\K}$ and \emph{irrational} otherwise.

\begin{corollary} \label{cor: global}
	Let $\K$ be a global function field, and let $\mZ_{\K}$ be its ring of integers and $\mT_{\K}$ the compact dual group of $\mZ_{\K}$.
	Let $\mK$ be a finite set of positive integers.
	Suppose $\alpha_r \in \mT_{\K}$ for $r \in \mK \cup \{0\}$, and let
	\begin{equation*}
		P(u) = \prod_{r \in \mK \cup \{0\}} \alpha_r^{u^r}.
	\end{equation*}
	Suppose that for some $k \in \mK$,
	\begin{itemize}
		\item	$p \nmid k$,
		\item	for any $v \in \N$, $p^vk \notin \mK$, and
		\item	$\alpha_k$ is irrational.
	\end{itemize}
	Then $(P(u))_{u \in \mZ_{\K}}$ is well-distributed in $\mT_{\K}$.
\end{corollary}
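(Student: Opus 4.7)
The approach is to reduce Corollary \ref{cor: global} to Theorem \ref{thm: multivariable} via the structural observation that $\mZ_\K$ is a free $\mZ$-module of rank $d := [\K : \F_q(t)]$. Fix a $\mZ$-basis $\omega_1, \ldots, \omega_d$ of $\mZ_\K$; this identifies $\mZ_\K \cong \mZ^d$ as additive groups, transports F{\o}lner sequences accordingly, and by Pontryagin duality yields $\mT_\K \cong \mT^d$. By Weyl's criterion for well-distribution in the compact abelian group $\mT_\K$, it suffices to show that for every non-zero $V \in \mZ_\K$ and every F{\o}lner sequence $(F_N)$ in $\mZ_\K$,
\begin{equation*}
	\frac{1}{|F_N|} \sum_{u \in F_N} \chi_V(P(u)) \tendsto{N \to \infty} 0,
\end{equation*}
where $\chi_V(\alpha) = \alpha(V)$ is the non-trivial character of $\mT_\K$ associated to $V$.

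To handle this sum, I would write $u = \sum_j u_j \omega_j$ with $u_j \in \mZ$ and compute $\chi_V(P(u)) = \prod_{r \in \mK \cup \{0\}} \alpha_r(u^r V)$ by expanding $u^r V$ in the basis $\omega_1, \ldots, \omega_d$. The additivity of each $\alpha_r$ on $\mZ_\K$ then factors the result into $\chi_V(P(u)) = e(Q_V(u_1, \ldots, u_d))$ for a polynomial $Q_V \in \mR[u_1, \ldots, u_d]$ whose exponent vectors have total degree in $\mK \cup \{0\}$; explicitly, the coefficient of $\prod_j u_j^{s_j}$ (for $\bm{s} \in S_d(r)$) is $\binom{r}{\bm{s}} \gamma_{r,\bm{s}}$, where $\gamma_{r,\bm{s}} \in \mT$ is the character of $\mZ$ given by $w \mapsto \alpha_r(wV\prod_j \omega_j^{s_j})$. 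Since $e \colon \mT \to S^1$ is itself a non-trivial character of $\mT$, well-distribution of $(Q_V(u))$ in $\mT$ --- guaranteed by Theorem \ref{thm: multivariable} --- suffices to conclude. Hence it remains to find some $\bm{l} \in S_d(k)$ with $p \nmid \binom{k}{\bm{l}}$ such that $\gamma_{k,\bm{l}}$ is irrational in $\mT$; the other hypotheses of Theorem \ref{thm: multivariable} ($p \nmid k$ and $p^v k \notin \mK$) carry over directly.

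The main obstacle is bridging irrationality of $\alpha_k$ in $\mT_\K$ (which controls every non-zero principal $\mZ_\K$-ideal) with irrationality of $\gamma_{k,\bm{l}}$ in $\mT$ (which concerns only rank-one $\mZ$-submodules of $\mZ_\K$). I would argue by contradiction: suppose $\gamma_{k,\bm{s}}$ is rational in $\mT$ for every $\bm{s} \in S_d(k)$ with $p \nmid \binom{k}{\bm{s}}$. For each $i \in \{1, \ldots, d\}$, let $\bm{s}_i$ denote the multi-index having a $1$ in position $i$ and $k-1$ in position $1$ (with the convention $\bm{s}_1 = (k, 0, \ldots, 0)$); then $\binom{k}{\bm{s}_i} \in \{1, k\}$ is nonzero mod $p$ by the hypothesis $p \nmid k$. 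Rationality of each $\gamma_{k,\bm{s}_i}$ supplies $f_i \in \mZ \setminus \{0\}$ with $\alpha_k(f_i w V \omega_1^{k-1} \omega_i) = 1$ for all $w \in \mZ$. Taking $f$ to be a common multiple of $f_1, \ldots, f_d$ and decomposing an arbitrary $y = \sum_i y_i \omega_i \in \mZ_\K$ in the basis gives $\alpha_k(f V \omega_1^{k-1} y) = 1$ for every $y \in \mZ_\K$, so $\alpha_k^{fV\omega_1^{k-1}}$ is trivial and $\alpha_k$ is rational --- contradicting the hypothesis and completing the reduction.
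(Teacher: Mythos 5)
Your argument is correct, and while it follows the paper's overall skeleton---fix an integral basis $\omega_1,\ldots,\omega_d$, identify $\mZ_{\K}\cong\mZ^d$ and $\mT_{\K}\cong\mT^d$, use Weyl's criterion to reduce to a single nontrivial character $\chi_V$, $V\in\mZ_{\K}\setminus\{0\}$, and feed the resulting $d$-variable polynomial into Theorem \ref{thm: multivariable}---the way you verify the irrationality hypothesis is genuinely different and more self-contained than the paper's. The paper writes $u^r=\sum_l P_{r,l}(u_1,\ldots,u_d)b_l$ via structure constants, imports two lemmas from \cite{ab} (linear independence of $P_{k,1},\ldots,P_{k,d}$ when $p\nmid k$, and that irrationality of $\alpha_k$ forces some coordinate $\alpha_{k,j}\in\mT$ to be irrational), and then runs a matrix argument (multiplication by $x=\bigl(\sum_j v_jb_j\bigr)/\bigl(\sum_j a_jb_j\bigr)$) to show the forms $Q_{k,1},\ldots,Q_{k,d}$ are linearly independent, from which an irrational degree-$k$ coefficient is extracted. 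You instead read off the coefficients of $Q_V$ directly as restricted characters, $\gamma_{r,\bm{s}}\in\mT$ dual to $w\mapsto\alpha_r\bigl(wV\prod_j\omega_j^{s_j}\bigr)$ on $\mZ$, with multinomial factors $\binom{r}{\bm{s}}$, and you obtain the required irrational coefficient by a short duality contradiction: if $\gamma_{k,\bm{s}_i}$ were rational for each multi-index $\bm{s}_i$ (with $k-1$ in position $1$ and $1$ in position $i$, whose multinomial coefficients $1$ and $k$ are nonzero mod $p$ precisely because $p\nmid k$), then clearing denominators and using additivity in $y=\sum_i y_i\omega_i$ would make $\alpha_k^{fV\omega_1^{k-1}}$ trivial with $fV\omega_1^{k-1}\neq 0$ (as $\mZ_{\K}$ is a domain), contradicting irrationality of $\alpha_k$. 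This buys independence from \cite[Lemmas 7.4 and 7.5]{ab} and keeps the reduction elementary; the only points worth spelling out, which you already handle implicitly and which are at the same level of rigor as the paper, are lifting the $\mT$-valued coefficients $\binom{r}{\bm{s}}\gamma_{r,\bm{s}}$ to $\mR$ before invoking Theorem \ref{thm: multivariable} (harmless, since changing a coefficient by an element of $\mZ$ does not affect the sequence in $\mT$, and irrationality is lift-independent), and observing that monomials arising from distinct $r\in\mK\cup\{0\}$ have distinct total degrees, so the coefficient of $\prod_j u_j^{l_j}$ for $\bm{l}\in S_d(k)$ is exactly $\binom{k}{\bm{l}}\gamma_{k,\bm{l}}$, with the nonzero scalar $\binom{k}{\bm{l}}\in\F_p^{\times}$ preserving irrationality.
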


\begin{proof}
	Suppose $\K$ is a degree $d$ extension of $\F_q(t)$.
	Let $b_1, \ldots, b_d \in \mZ_{\K}$ be an integral basis.
	That is, $b_1, \ldots, b_d$ is a basis for $\K$ over $\F_q(t)$, and
	\begin{equation*}
		\mZ_{\K} = \left\{ \sum_{j=1}^d u_j b_j : u_j \in \F_q[t] \right\}.
	\end{equation*}
	For a proof of existence of such a basis, see \cite[Chapter IV, Theorem 3]{chevalley}.
	
	The integral basis $b_1, \ldots, b_d$ establishes an isomorphism $\mZ_{\K} \cong \mZ^d$, so $\mT_{\K}$ is isomorphic to $\mT^d$.
	In particular, for each $r \in \mK \cup \{0\}$, there exist $\alpha_{r,1}, \ldots, \alpha_{r,d} \in \mT$ such that
	\begin{equation*}
		\alpha_r \left( \sum_{j=1}^d u_j b_j \right) = e \left( \sum_{j=1}^d \alpha_{r,j} u_j \right)
	\end{equation*}
	for $(u_1, \ldots, u_d) \in \mZ^d$.
	
	For $s \in \N$, $j_1, \ldots, j_s \in \{1, \ldots, d\}$, and $l \in \{1, \ldots, d\}$, let $c_{j_1, \ldots, j_s, l} \in \mZ$ such that $\prod_{i=1}^s b_{j_i} = \sum_{l=1}^d c_{j_1, \ldots, j_s,l} b_l$.
	Then given $u = \sum_{j=1}^d u_j b_j \in \mZ_{\K}$ and $r \in \N$, we have
	\begin{equation*}
		u^r = \left( \sum_{j=1}^d u_j b_j \right)^r = \sum_{j_1, \ldots, j_r=1}^d u_{j_1} \cdot \ldots \cdot u_{j_r} b_{j_1} \cdot \ldots \cdot b_{j_r} = \sum_{j_1, \ldots, j_r, l=1}^d c_{j_1, \ldots, j_r, l} u_{j_1} \cdot \ldots \cdot u_{j_r} b_l.
	\end{equation*}
	Let $P_{r,l}(u_1, \ldots, u_d) = \sum_{j_1,\ldots,j_r=1}^d c_{j_1, \ldots, j_r, l} u_{j_1} \cdot \ldots \cdot u_{j_r} \in \mZ[u_1, \ldots, u_d]$ so that
	\begin{equation*}
		u^r = \sum_{l=1}^d P_{r,l}(u_1, \ldots, u_d) b_l.
	\end{equation*}
	Note that $P_{r,l}$ is homogeneous of degree $r$.
	Moreover, if $p \nmid r$, then the polynomials $P_{r,1}, \ldots, P_{r,d}$ are linearly independent (in fact, algebraically independent) over $\mQ$ by \cite[Lemma 7.5]{ab}.
	
	In order to establish well-distribution of $(P(u))_{u \in \mZ_{\K}}$ in $\mT_{\K}$, it suffices by Weyl's criterion to prove that $(\chi(P(u)))_{u \in \mZ_{\K}}$ is well-distributed in the group of $p$th roots of unity for every nontrivial character $\chi : \mT_{\K} \to S^1$.
	Fix a nontrivial character $\chi : \mT_{\K} \to S^1$.
	Then there exists a nonzero $\bm{v} = (v_1, \ldots, v_d) \in \mZ^d$ such that for all $r \in \mK \cup \{0\}$,
	\begin{equation*}
		\chi(\alpha_r) = e \left( \sum_{j=1}^d \alpha_{r,j} v_j \right).
	\end{equation*}
	We then have
	\begin{equation*}
		\chi(P(u)) = \chi \left( \prod_{r \in \mK \cup \{0\}} \alpha_r^{u^r} \right) = e \left( \sum_{r \in \mK \cup \{0\}} \sum_{j,l,m=1}^d c_{j,l,m} \alpha_{r,j} P_{r,l}(u_1, \ldots, u_d) v_m \right)
	\end{equation*}
	for $u = \sum_{j=1}^d u_j b_j \in \mZ_{\K}$.
	We will use Theorem \ref{thm: multivariable} to show that the polynomial
	\begin{equation*}
		Q(u_1, \ldots, u_d) = \sum_{r \in \mK \cup \{0\}} \sum_{j,l,m=1}^d c_{j,l,m} \alpha_{r,j} P_{r,l}(u_1, \ldots, u_d) v_m
	\end{equation*}
	is well-distributed in $\mT$.
	For $r \in \mK \cup \{0\}$ and $j \in \{1, \ldots, d\}$, let
	\begin{equation*}
		Q_{r,j}(u_1, \ldots, u_d) = \sum_{l,m=1}^d c_{j,l,m} v_m P_{r,l}(u_1, \ldots, u_d)
	\end{equation*}
	so that $Q = \sum_{r \in \mK \cup \{0\}} \sum_{j=1}^d \alpha_{r,j} Q_{r,j}$.
	Note that $Q_{r,j}$ is homogeneous of degree $r$.
	
	We claim that $Q_{k,1}, \ldots, Q_{k,d}$ are linearly independent.
	Indeed, suppose for contradiction that $\bm{a} = (a_1, \ldots, a_d) \in \mZ^d \setminus \{\bm{0}\}$ and
	\begin{equation*}
		\sum_{j=1}^d a_j Q_{k,j} = 0.
	\end{equation*}
	Then substituting in the definition of $Q_{k,j}$, we have
	\begin{equation*}
		\sum_{j,l,m=1}^d a_j c_{j,l,m} v_m P_{k,l} = 0.
	\end{equation*}
	But since $p \nmid k$, the polynomials $P_{k,1}, \ldots, P_{k,d}$ are linearly independent, so we conclude
	\begin{equation} \label{eq: e_l=0}
		e_l = \sum_{j,m=1}^d a_j c_{j,l,m} v_m = 0
	\end{equation}
	for every $l \in \{1, \ldots, d\}$.
	From the definition of the coefficients $c_{j,l,m}$, we have the symmetry $c_{j,l,m} = c_{l,j,m}$, so
	\begin{equation*}
		e_l = \sum_{j,j=1}^d a_j c_{l,j,m} v_m = 0
	\end{equation*}
	for every $l \in \{1, \ldots, d\}$.
	We can rewrite the formula for $e_l$ using the matrix
	\begin{equation*}
		M_l = \left( \begin{array}{ccc}
			c_{l,1,1} & \ldots & c_{l,d,1} \\
			\vdots & \ddots & \vdots \\
			c_{l,1,d} & \ldots & c_{l,d,d}
		\end{array} \right)
	\end{equation*}
	as
	\begin{equation*}
		e_l = \bm{v} M_l \bm{a}^T.
	\end{equation*}
	But the matrix $M_l$ represents multiplication by $b_l$ in the basis $\{b_1, \ldots, b_d\}$, and the vectors $\bm{v} = (v_1, \ldots, v_d)$ and $\bm{a} = (a_1, \ldots, a_d)$ are both nonzero.
	Let
	\begin{equation*}
		x = \frac{\sum_{j=1}^d v_j b_j}{\sum_{j=1}^d a_j b_j} \in \K.
	\end{equation*}
	Writing $x = \sum_{j=1}^d x_j b_j$ for some $(x_1, \ldots, x_d) \in \mQ^d \setminus \{\bm{0}\}$, we have that $M = \sum_{l=1}^d x_l M_l$ represents multiplication by $x$ in the basis $\{b_1, \ldots, b_d\}$.
	In particular, $M\bm{a}^T = \bm{v}^T$.
	But then $\sum_{l=1}^d x_l e_l = \bm{v}M\bm{a}^T = \bm{v} \bm{v}^T \ne 0$, which contradicts \eqref{eq: e_l=0}.
	We conclude that there can be no nontrivial linear combination of $Q_{k,1}, \ldots, Q_{k,d}$ that is equal to 0.
	That is, $Q_{k,1}, \ldots, Q_{k,d}$ are linearly independent as claimed.
	
	Now, since $\alpha_k$ is irrational, at least one of the coordinates $\alpha_{k,1}, \ldots, \alpha_{k,d}$ is irrational (i.e., not an element of $\mQ/\mZ$) by \cite[Lemma 7.4]{ab}.
	Combining this observation with the linear independence of $Q_{k,1}, \ldots, Q_{k,d}$, it follows that if we write
	\begin{equation*}
		\sum_{j=1}^d \alpha_{k,j} Q_{k,j}(u_1, \ldots, u_d) = \sum_{\bm{l} \in S_d(k)} \beta_{\bm{l}} \prod_{i=1}^d u_i^{l_i},
	\end{equation*}
	then at least one of the coefficients $\beta_{\bm{l}}$ is irrational.
	Thus, $Q$ satisfies the hypothesis of Theorem \ref{thm: multivariable}, so $(Q(u_1,\ldots, u_d))_{(u_1, \ldots, u_d) \in \mZ^d}$ is well-distributed in $\mT$.
	Consequently, $\chi(P(u)) = e(Q(u_1, \ldots, u_d))$ is well-distributed in the group of $p$th roots of unity as desired.
\end{proof}


\section*{Acknowledgements}

This work is supported by the Swiss National Science Foundation under grant TMSGI2-211214.


\end{document}